\titleformat*{\section}{\large\bfseries\sffamily}
\DeclareMathOperator{\dps}{depth}
\theoremstyle{plain}
\newtheorem{thm}{Theorem}[section]
\newtheorem{prop}[thm]{Proposition}
\newtheorem{lem}[thm]{Lemma}
\newtheorem{cor}[thm]{Corollary}
\theoremstyle{definition}
\theoremstyle{remark}
\renewcommand{\k}{\mathbbm{k}}
\begin{document}
\title{Clique Vectors of $k$-Connected Chordal Graphs}


\author{Afshin Goodarzi%
  \thanks{Email address: \texttt{afshingo@kth.se}}}
\affil{{\small Royal Institute of Technology, Department of Mathematics, S-100 44, Stockholm, Sweden.}}

\maketitle



\begin{center}

\textit{Dedicated to Ralf Fr\"{o}berg on the occasion of his 70th birthday}

\vspace{12pt}
\end{center}

\begin{abstract}
The clique vector $\mathfrak{c}(G)$ of a graph $G$ is the sequence $(c_1, c_2, \ldots,c_d)$ in $\mathbb{N}^d$, where $c_i$ is the number of cliques in $G$ with $i$ vertices and $d$ is the largest cardinality of a clique in $G$. In this note, we use tools from commutative algebra to characterize all possible clique vectors of $k$-connected chordal graphs. 
\end{abstract}

\section{Introduction}

The clique vector of a graph $G$ is an interesting numerical invariant assigned to $G$. The study of clique vectors goes back at least to Zykov's generalization of Tur\'{a}n's graph theorem~\cite{Z}. The clique vector of $G$ is by definition the $f$-vector of its clique complex. Challenging problems including the Kalai--Eckhoff conjecture and the classification of the $f$-vector of flag complexes led many researchers to investigate clique vectors, see for instance \cite{F1}, \cite{F2}, and \cite{HHMTZ}. While the Kalai--Eckhoff conjecture is now settled by Frohmader~\cite{F1}, the latter problem is still wide open. 

Herzog et~al. \cite{HHMTZ} characterized all possible clique vectors of chordal graphs. A graph $G$ is called \emph{$k$-connected} if it has at least $k$ vertices and removing any set of vertices of $G$ of cardinality less than $k$ yields a connected graph. Thus a $1$-connected graph is simply a connected graph. We use the convention that every graph is $0$-connected. The \emph{connectivity number} $\kappa(G)$ of $G$ is the maximum number $k$ such that $G$ is $k$-connected. The aim of this paper is to characterize all possible clique vectors of $k$-connected chordal graphs. More precisely we prove the following result.

\begin{thm}\label{main}
A vector $\mathfrak{c}=\left(c_1,\ldots,c_d\right)\in\mathbb{N}^d$ is the clique vector of a $k$-connected chordal graph if and only if the vector $\mathfrak{b}=\left(b_1,\ldots,b_d\right)$ defined by 
\begin{equation}\label{eq1}
\sum_{1}^d b_i x^{i-1}=\sum_{1}^d c_i (x-1)^{i-1}
\end{equation} 
has positive components and $b_1=b_2=\ldots=b_k=1$. 
\end{thm}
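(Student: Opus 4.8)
The plan is to treat the two hypotheses on $\mathfrak b$ separately. Positivity of $\mathfrak b$ is \emph{exactly} Herzog et al.'s characterization of clique vectors of chordal graphs, which I would simply cite, so the entire task reduces to translating the extra condition $b_1=\cdots=b_k=1$ into $k$-connectivity. The bridge is a closed-form reading of the transform \eqref{eq1}. Since \eqref{eq1} is $\k$-linear in $\mathfrak c$ and sends the clique vector $\left(\binom{s}{1},\ldots,\binom{s}{s}\right)$ of a single $s$-clique to the all-ones vector $(1,\ldots,1,0,\ldots)$ of length $s$ — because $\sum_{i}\binom{s}{i}(x-1)^{i-1}=\frac{(1+(x-1))^{s}-1}{x-1}=1+x+\cdots+x^{s-1}$ — I would combine it with the clique-tree inclusion--exclusion for the $f$-vector of the clique complex to get a formula for $\mathfrak b$ directly in terms of the clique structure of $G$.

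Concretely, fix a clique tree $T$ of a connected chordal graph $G$: its nodes are the maximal cliques $C_1,\ldots,C_m$, and each edge $e$ carries the separator $S_e=C\cap C'$ of its endpoints. Because a fixed $i$-clique is contained precisely in the cliques of a \emph{subtree} of $T$ (the clique-intersection property), it is counted with net multiplicity $|V|-|E|=1$, giving $c_i=\sum_t\binom{|C_t|}{i}-\sum_e\binom{|S_e|}{i}$. Applying the linear transform term by term, each maximal clique contributes an all-ones vector of its length and each separator subtracts one, yielding the key identity $b_i=M_{\ge i}-E_{\ge i}$, where $M_{\ge i}$ counts maximal cliques of size $\ge i$ and $E_{\ge i}$ counts tree edges whose separator has size $\ge i$. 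I would sanity-check this on a single clique (where $b_i=1$) and on two cliques glued along a vertex.

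Next I would invoke the classical fact that for a connected, non-complete chordal graph the minimal vertex separators are exactly the $S_e$, so $\kappa(G)=\min_e|S_e|=:\sigma$; moreover each maximal clique strictly contains every incident separator (else it would fail to be maximal), so every $|C_t|\ge\sigma+1$. For $i\le\sigma$ all separators give $E_{\ge i}=m-1$ and all maximal cliques give $M_{\ge i}=m$, hence $b_i=1$; and if $p\ge 1$ edges attain the minimum separator size $\sigma$, then $M_{\ge\sigma+1}=m$ while $E_{\ge\sigma+1}=m-1-p$, so $b_{\sigma+1}=1+p\ge 2$. Thus the all-ones prefix of $\mathfrak b$ has length exactly $\sigma=\kappa(G)$, which is precisely the statement $b_1=\cdots=b_k=1\iff\kappa(G)\ge k$.

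Assembling the theorem: given $\mathfrak c$ with $\mathfrak b$ positive and $b_1=\cdots=b_k=1$, Herzog et al.\ produce a chordal graph $G$ with clique vector $\mathfrak c$; its $\mathfrak b$ is the prescribed one, so $\kappa(G)\ge k$ and $G$ is $k$-connected, giving sufficiency. Conversely a $k$-connected chordal graph has positive $\mathfrak b$ (Herzog et al.) and $\kappa\ge k$, whence $b_1=\cdots=b_k=1$, giving necessity. I would dispatch the complete graph ($m=1$, $\kappa=d=n$, $b_i=1$ for $i\le n$) and the disconnected case ($b_1\ge 2$, $\kappa=0$) separately, and verify compatibility with the conventions that every graph is $0$-connected and $K_n$ is $n$-connected. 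I expect the main obstacle to be \emph{building} this dictionary rather than using it: proving $b_i=M_{\ge i}-E_{\ge i}$ cleanly (the clique-tree inclusion--exclusion and its interaction with \eqref{eq1}) and justifying $\kappa(G)=\min_e|S_e|$ together with the strict inequality $|C_t|>|S_e|$, after which everything downstream is bookkeeping.
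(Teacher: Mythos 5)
Your proposal is correct in substance, but it takes a genuinely different route from the paper. The paper never touches clique trees: it works algebraically, first proving (via Hochster's formula) that a graph $G$ on $n$ vertices is $k$-connected if and only if $\beta_{i,i+1}(\k[\Delta(G)])=0$ for all $i\geq n-k$, then combining this with Fr\"{o}berg's theorem to get $\dps(\k[\Delta(G)])=\kappa(G)+1$ for chordal $G$, and finally using exterior algebraic shifting --- which preserves the $f$-vector and the depth, and (via Eagon--Reiner and commutation with Alexander duality) sends clique complexes of chordal graphs to clique complexes of threshold graphs --- to reduce the whole theorem to threshold graphs, where both directions follow from an explicit bijection between words in $\{S,D\}$ and $b$-vectors. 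Your argument instead stays entirely inside graph theory: the clique-tree inclusion--exclusion $c_i=\sum_t\binom{|C_t|}{i}-\sum_e\binom{|S_e|}{i}$ (which holds because the maximal cliques containing a fixed clique form a subtree, by the Helly property of subtrees) transforms under \eqref{eq1} into the dictionary $b_i=M_{\geq i}-E_{\geq i}$, and the Dirac/Blair--Peyton fact that the $S_e$ are exactly the minimal vertex separators turns the all-ones prefix of $\mathfrak{b}$ into $\kappa(G)$ on the nose. Your approach buys several things the paper's does not: it is elementary and avoids all commutative algebra; it gives a concrete combinatorial meaning to every $b_i$ of an arbitrary connected chordal graph (not just a threshold one); and it makes explicit the striking consequence that the connectivity of a chordal graph is determined by its clique vector alone --- which is also why your sufficiency direction is sound: any chordal realization of $\mathfrak{c}$ produced by Herzog et al.\ is automatically $k$-connected. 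Note, too, that your forest argument (the edges with $|S_e|\geq i$ span a subforest on the nodes with $|C_t|\geq i$, so $M_{\geq i}\geq E_{\geq i}+1$) already yields positivity of $\mathfrak{b}$, so the citation of Herzog et al.\ is only genuinely needed for realization, and even that could be replaced by a direct threshold construction from $\mathfrak{b}$. What the paper's route buys instead is the by-products of independent interest: the Betti-number characterization of $k$-connectivity for all graphs, the sharp equality $\dps=\kappa+1$ for chordal graphs, and the enumerative bijection between $k$-connected threshold graphs and their $b$-vectors. The two steps you flag as the main obstacles (the inclusion--exclusion identity and $\kappa(G)=\min_e|S_e|$) are indeed the only places requiring care, and both are classical facts with short proofs, so your plan goes through.
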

The theorem above is a refinement of~\cite[Theorem 1.1]{HHMTZ}, in the sense that putting $k=0$, the only requirement on $b$-numbers is to be positive, so \cite[Theorem 1.1]{HHMTZ} will be obtained.

In order to prove our main result, we shall use techniques from algebraic shifting theory to reduce the problem to the class of shifted graphs, the so called threshold graphs.

The rest of this paper is organized as follows. In Section~\ref{s2}, we verify the characterization for threshold graphs by giving a combinatorial interpretation of the $b$-numbers. Section~\ref{s3} is devoted to a study of the connectivity of a graph via certain homological invariants of a ring associated to it. Finally, in Section~\ref{s4} we prove our main result.

All undefined algebraic terminology can be found in the book of Herzog and Hibi~\cite{HH}.

\section{Clique Vectors of Threshold Graphs}\label{s2}

Let $G$ be a graph. We denote by $S(G)$ the graph obtained from $G$ by adding a new vertex and connecting it to all vertices of $G$. Also, we denote by $D(G)$ the graph obtained from $G$ by adding an isolated vertex. Clearly the numbers of $i$-cliques in $G$ and $D(G)$ are the same, unless $i=1$. On the other hand, it is easy to verify the following formula that relates the numbers of cliques in $G$ and $S(G)$:
\begin{equation}\label{cone}
1+\sum_i c_i\left(S(G)\right)x^i=\left(1+\sum_i c_i(G)x^i \right)(1+x).
\end{equation}
A graph $T$ is called \emph{threshold}, if it can be obtained from the null graph by a sequence of $S$- and $D$-operators. Thus, we have a bijection between threshold graphs and words on the alphabet $\{S,D\}$ (reading from left to right) with an $S$ in its final (rightmost) position\footnote{The $D$- and $S$-operations on the null graph, i.e. the graph having zero vertices, result the same graph. So, to have a unique representation of each threshold graph, we may assume that the operation $D$ is allowed when the graph is not null.}. Clearly, every threshold graph is chordal.

Many properties of a threshold graph can be read off from its word. Among them are the following simple but useful facts. 
\begin{lem}\label{l1}
Let $T$ be a threshold graph. Then the following hold:
\begin{compactenum}[\rm (i)]
\item The number of times that $S$ appears in $T$ is the clique number of $T$.
\item $T$ is $k$-connected if and only if there is no $D$ in the first $k$ letters of $T$.
\end{compactenum}
\end{lem}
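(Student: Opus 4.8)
The plan is to prove both statements by induction on the length of the word representing $T$, using the bijection between threshold graphs and words on $\{S,D\}$ described above. For part (i), the key observation is that the $S$-operator is precisely the cone operation, which raises the clique number by exactly one (a new apex vertex joins every existing clique, and in particular the apex together with a maximum clique of $G$ forms a clique one larger), while the $D$-operator adds an isolated vertex and therefore leaves the clique number unchanged. Formally, I would read this off from the relations governing $c_i$: equation~\eqref{cone} shows that $S(G)$ has clique number one greater than $G$, since multiplying the clique-generating polynomial by $(1+x)$ raises its degree by one; and the remark preceding the lemma shows that $D(G)$ has the same clique number as $G$, since $D$ only changes $c_1$. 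Starting from the null graph (clique number $0$, with zero occurrences of $S$) and tracking how each letter acts, the number of $S$'s read so far always equals the current clique number, which gives (i).

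For part (ii), I would again induct on the word length, but now the heart of the matter is understanding how connectivity behaves under $S$ and $D$. The cleanest direction is to analyze the cone operator: I claim that for any nonempty graph $G$, the cone $S(G)$ is $k$-connected if and only if $G$ is $(k-1)$-connected. Indeed, the apex vertex of $S(G)$ is adjacent to everything, so to disconnect $S(G)$ one must remove the apex and then disconnect $G$; hence the minimum size of a disconnecting set of $S(G)$ is one more than that of $G$, giving $\kappa(S(G)) = \kappa(G)+1$ (with the appropriate convention handling the vertex-count condition in the definition of $k$-connectedness). The $D$-operator, by contrast, adds an isolated vertex, which immediately destroys connectivity: $D(G)$ is disconnected, so it is $1$-connected only vacuously fails and is at most $0$-connected in the relevant sense. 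Thus the first time a $D$ appears in the word is exactly the point at which further connectivity is capped.

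Combining these two effects, I would argue as follows. Reading the word from left to right, each initial $S$ raises the connectivity by one, so after an uninterrupted prefix of $j$ letters all equal to $S$ the graph is $j$-connected. The moment a $D$ is encountered, an isolated vertex is introduced and the graph becomes disconnected, so its connectivity number drops to $0$ at that stage; subsequent $S$-operations can cone this disconnected graph back up, but — and this is the crucial point — coning a graph that already contains an isolated vertex pair cannot recover the lost connectivity beyond what the prefix guaranteed, because the obstruction to high connectivity created by the early $D$ persists as a small separating structure. More precisely, if the first $D$ occurs in position $k+1$, then there are exactly $k$ leading $S$'s, and one shows both that $T$ is $k$-connected (from the leading block of cones) and that $T$ is not $(k+1)$-connected (by exhibiting a separating set of size $k$ arising from the first $D$). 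This yields the equivalence: $T$ is $k$-connected if and only if the first $k$ letters are all $S$, i.e.\ there is no $D$ among the first $k$ letters.

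I expect the main obstacle to be the ``not $(k+1)$-connected'' half of part (ii): one must carefully produce an explicit separating set of size $k$ whose removal disconnects $T$ when a $D$ appears in position $k+1$. The leftmost $D$ adds a vertex $v$ that is isolated \emph{at the moment of its creation}, but later $S$-operations make $v$ adjacent to all subsequently added apex vertices, so $v$ is not isolated in the final graph $T$; the separating set must consist of precisely those apex vertices (the $S$'s occurring after the first $D$) together with the $k$ leading apexes, and one must verify both that removing the right $k$ of them separates $v$ from the rest and that no smaller set does. Handling the interaction between the early $D$ and the later cones correctly, and checking the boundary conventions ($0$-connectedness, the minimum-vertex clause in the definition of $k$-connected), is where the care is needed; the inductive bookkeeping on the word is otherwise routine.
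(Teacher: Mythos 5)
Part (i) of your proposal is fine: $S$ multiplies the clique polynomial by $(1+x)$ and hence raises the clique number by one, $D$ leaves it unchanged, and induction on the word finishes the argument. Part (ii), however, has a genuine gap, and it comes from reading the word in the wrong direction. In the paper's convention the \emph{rightmost} letter is the first operation applied to the null graph --- this is exactly why every word must end in $S$ (the footnote forbids applying $D$ to the null graph) --- so the leftmost letter is the \emph{last} operation applied; equivalently, $T=S(T')$ precisely when the word of $T$ is $S$ followed by the word of $T'$, which is how the proof of the Proposition in Section~2 uses it ($T=S(T')$ forces $b_1=1$). Your argument throughout treats the leftmost letters as the operations applied first (``each initial $S$ raises the connectivity by one\dots{} The moment a $D$ is encountered\dots{} subsequent $S$-operations can cone this disconnected graph back up''). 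Under that reading the statement you are trying to prove is simply false: the word $SSDS$, read as operations $S,S,D,S$ applied in that order, produces a triangle with a pendant vertex, which is not $2$-connected even though its first two letters are $S$. Worse, the claim your third paragraph leans on --- that later cones ``cannot recover the lost connectivity beyond what the prefix guaranteed'' --- contradicts your own (correct) cone lemma $\kappa(S(G))=\kappa(G)+1$: applying $S,D,S,S,S,S,S$ in that order yields $K_7$ minus an edge, which is $5$-connected, far beyond the prefix bound of $1$. Connectivity of a threshold graph is governed by the cones applied \emph{after} the last isolated vertex, i.e.\ by the leading letters under the paper's convention, not by any prefix in application order, so the ``crucial point'' of your argument cannot be made rigorous.

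The fix is short, and all the ingredients are already in your proposal; they just have to be applied at the correct end of the word. Induct on the word: if it is $S$ followed by $w'$, then $T=S(T')$ where $T'$ has word $w'$, and your cone lemma gives $\kappa(T)=\kappa(T')+1$, which matches the shift of letters; if it is $D$ followed by $w'$, then $T=D(T')$ with $T'$ nonempty (the word ends in $S$), so $T$ is disconnected, $\kappa(T)=0$, and indeed a $D$ occurs among the first $k$ letters for every $k\geq 1$. For the ``not $(k+1)$-connected'' half in the first case, note that the vertex $v$ added by the leftmost $D$ (the \emph{last} isolated vertex created) is adjacent in $T$ exactly to the $k$ apexes added after it, so removing those $k$ vertices isolates $v$ while leaving the nonempty earlier part of the graph intact. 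For comparison, the paper states this lemma without proof as a pair of routine facts; your two structural claims ($\kappa(S(G))=\kappa(G)+1$, and $D(G)$ disconnected for nonempty $G$) are precisely the intended routine argument once the reading direction is straightened out.
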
\qed

Let $T$ be a threshold graph with clique number $d$. We put a $/$ right after every $S$ in the word of $T$, thus breaking the word of $T$ into $d$ subwords. Let $b_i$ be the length of the $i$-th subword. Then the $b$-vector of $T$ is $\mathfrak{b}(T)=(b_1,b_2,\ldots, b_d)$. For instance, if $T=DDDSSDSDDS$, then $T$ breaks to $DDDS/S/DS/DDS/$ and $\mathfrak{b}(T)=(4,1,2,3)$. 

It turns out that knowing the $b$-vector of a threshold graph is equivalent to knowing its clique vector.

\begin{prop}
Let $T$ be a threshold graph. Then the clique vector $\mathfrak{c}(T)=(c_1,\ldots, c_d)$ can be obtained from $\mathfrak{b}(T)=(b_1,\ldots, b_d)$ using the formula 
\begin{equation}\label{eq:3}
\sum_{i=1}^d b_i(x+1)^{i-1}=\sum_{i=1}^d c_i x^{i-1}.
\end{equation}
\end{prop}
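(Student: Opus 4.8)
The plan is to track how the two elementary operations $S$ and $D$ act on the full clique generating polynomial and then to read off \eqref{eq:3} after a single linear change of variable. Write $P_G(x):=1+\sum_{i\ge 1}c_i(G)\,x^i$, so that the empty clique contributes the constant term $1$. The cone formula \eqref{cone} says that passing from $G$ to $S(G)$ multiplies $P_G$ by $(1+x)$, while the remark preceding \eqref{cone} says that $D$ fixes every $c_i$ except $c_1$, which increases by one; hence $P_{D(G)}=P_G+x$. The substitution $x=y-1$ turns these into especially clean rules: writing $\widetilde{P}_G(y):=P_G(y-1)$, the operation $S$ sends $\widetilde{P}$ to $y\,\widetilde{P}$ and $D$ sends $\widetilde{P}$ to $\widetilde{P}+(y-1)$. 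This change of variable is the heart of the argument, since it makes the factor $(y-1)$ that appears in \eqref{eq:3} emerge naturally from each $D$-operation.

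Next I would record the shape of the word. Since a $/$ is placed immediately after each $S$, the $i$-th subword is $D^{a_i}S$ with $a_i=b_i-1\ge 0$, so the word of $T$ is $D^{a_1}S\,D^{a_2}S\cdots D^{a_d}S$. Because the rightmost letter is the operation applied first to the null graph (this is exactly what forces it to be an $S$, by the footnote), the operations are performed reading the word from right to left; equivalently, the leftmost subword corresponds to the outermost, last-applied operations.

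Now set up induction on $d$. Let $T'$ be the threshold graph with word $D^{a_2}S\cdots D^{a_d}S$, so that $\mathfrak b(T')=(b_2,\ldots,b_d)$ and $T$ is obtained from $T'$ by first applying $S$ and then applying $D$ a total of $a_1$ times. The rules above give the recursion $\widetilde P_T(y)=y\,\widetilde P_{T'}(y)+a_1(y-1)$. I claim that every threshold graph with $b$-vector $(b_1,\ldots,b_d)$ satisfies $\widetilde P_T(y)=1+(y-1)\sum_{m=1}^d b_m\,y^{m-1}$; feeding the inductive hypothesis for $T'$ into the recursion and substituting $a_1=b_1-1$ makes the correction terms telescope into exactly this shape, the base case $d=1$ (an edgeless graph on $b_1$ vertices, with $\widetilde P_T(y)=1+b_1(y-1)$) being immediate.

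Finally I would translate back. By definition $\widetilde P_T(y)=1+\sum_{i=1}^d c_i(y-1)^i$, so the formula just proved yields $\sum_{i=1}^d c_i(y-1)^i=(y-1)\sum_{m=1}^d b_m\,y^{m-1}$; cancelling the common factor $(y-1)$ and setting $y=x+1$ produces exactly \eqref{eq:3}. The only genuinely delicate point is the bookkeeping in the inductive step: one must respect that the leftmost subword is applied last, and verify that the $a_i=b_i-1$ corrections combine with the powers of $y$ to leave the clean coefficients $b_m$. Once the substitution $x=y-1$ is in place, however, each operation acts linearly on $\widetilde P$ and the remaining computation is routine.
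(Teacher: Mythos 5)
Your proof is correct and follows essentially the same route as the paper: an induction over the threshold word of $T$ driven by the cone formula \eqref{cone} for the $S$-operation and the trivial effect of $D$ on the clique vector. The only differences are organizational --- you induct on the clique number $d$, peeling off a whole block $D^{a_1}S$ and working in the shifted variable $y=x+1$, whereas the paper inducts on the number of vertices one letter at a time --- but the underlying computation is the same.
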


\begin{proof}
The statement is clear if $T$ is an isolated vertex, so we may inductively assume that it has been proved for threshold graphs on $n-1$ vertices. Suppose that $T$ is a threshold graph on $n$ vertices. Then $T$ is either $D(T')$ or $S(T')$, for a threshold graph $T'$. In the former case the statement follows easily from the induction hypothesis. \\
Suppose $T=S(T')$. Then $b_1=1$ and $\mathfrak{b}(T')=(b_2,\ldots,b_d)$. The induction hypothesis and equation~\eqref{cone} imply that
\[
1+\sum_i c_i(T)x^i=\left(1+x\sum_{i=2}^d b_i(x+1)^{i-2} \right)(1+x).
\]
Therefore equation~\eqref{eq:3} follows. 
\end{proof}

Let $\mathcal{B}(n,d,k)$ denote the set of all positive-integer vectors $(b_1,b_2,\ldots,b_d)$ such that $\sum b_i =n$ and $b_1=b_2=\cdots=b_k=1$. 
The set of $k$-connected threshold graphs on $n$ vertices and clique number $d$ is denoted by $\mathcal{T}(n,d,k)$. The mapping $T\mapsto \mathfrak{b}(T)$ is an injection from $\mathcal{T}(n,d,k)$ into $\mathcal{B}(n,d,k)$, by Lemma~\ref{l1}. A small computation, left to the reader, shows that the sets $\mathcal{T}(n,d,k)$ and $\mathcal{B}(n,d,k)$ have the same cardinality ${n-k-1}\choose{d-k-1}$. So, $T\mapsto \mathfrak{b}(T)$ is indeed a bijective correspondence between $\mathcal{T}(n,d,k)$ and $\mathcal{B}(n,d,k)$. Now putting this all together, we can conclude the following characterization of clique vectors of $k$-connected threshold graphs. 

\begin{cor}\label{spc}
A vector $\mathfrak{c}=\left(c_1,\ldots,c_d\right)\in\mathbb{N}^d$ is the clique vector of a $k$-connected threshold graph if and only if the vector $\mathfrak{b}=\left(b_1,\ldots,b_d\right)$ defined by equation~\eqref{eq1} has positive components and $b_1=b_2=\ldots=b_k=1$. 
\end{cor}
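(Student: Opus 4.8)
The plan is to prove Corollary~\ref{spc} by combining the bijective correspondence already established between $\mathcal{T}(n,d,k)$ and $\mathcal{B}(n,d,k)$ with the Proposition relating the $b$-vector to the clique vector. The key observation is that equations~\eqref{eq1} and~\eqref{eq:3} encode the \emph{same} linear change of variables, so the $b$-numbers appearing in the statement of the Corollary are precisely the combinatorial $b$-numbers $\mathfrak{b}(T)$ attached to the word of a threshold graph.

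First I would reconcile the two polynomial identities. Equation~\eqref{eq:3} reads $\sum_i b_i(x+1)^{i-1}=\sum_i c_i x^{i-1}$, whereas equation~\eqref{eq1} reads $\sum_i b_i x^{i-1}=\sum_i c_i(x-1)^{i-1}$. Substituting $x \mapsto x-1$ in~\eqref{eq:3} turns $(x+1)^{i-1}$ into $x^{i-1}$ and $x^{i-1}$ into $(x-1)^{i-1}$, which is exactly~\eqref{eq1}. Hence the $\mathfrak{b}$ defined algebraically in the Corollary's statement coincides with the combinatorial $\mathfrak{b}(T)$ of the Proposition. This identification is the conceptual heart of the argument; everything else is bookkeeping.

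With that identification in hand, I would argue both directions. For necessity, suppose $\mathfrak{c}$ is the clique vector of some $k$-connected threshold graph $T$ with clique number $d$. Let $n=\sum_i b_i$ be its number of vertices, which by Lemma~\ref{l1}(i) equals the total word length. By the Proposition, the vector $\mathfrak{b}$ determined by~\eqref{eq1} equals $\mathfrak{b}(T)$, so it has positive integer components; and by Lemma~\ref{l1}(ii), $k$-connectedness forces no $D$ among the first $k$ letters, i.e. the first $k$ subwords each consist of a single $S$, giving $b_1=\cdots=b_k=1$. Thus $\mathfrak{b}\in\mathcal{B}(n,d,k)$. For sufficiency, suppose $\mathfrak{b}$ has positive components with $b_1=\cdots=b_k=1$; set $n=\sum_i b_i$, so $\mathfrak{b}\in\mathcal{B}(n,d,k)$. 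Since $T\mapsto\mathfrak{b}(T)$ is a bijection from $\mathcal{T}(n,d,k)$ onto $\mathcal{B}(n,d,k)$, there is a $k$-connected threshold graph $T$ with $\mathfrak{b}(T)=\mathfrak{b}$, and by the Proposition its clique vector is exactly $\mathfrak{c}$.

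I do not anticipate a serious obstacle here, since the surjectivity of $T\mapsto\mathfrak{b}(T)$ onto $\mathcal{B}(n,d,k)$ has already been supplied via the cardinality count ${n-k-1\choose d-k-1}$. The one point requiring a little care is the change-of-variables step: I must make sure the substitution $x\mapsto x-1$ is applied consistently to both sides and that the indexing and the top degree $d-1$ match, so that the algebraically defined $\mathfrak{b}$ in~\eqref{eq1} is genuinely the same vector as the word-length vector $\mathfrak{b}(T)$. Once that is verified, the Corollary follows immediately.
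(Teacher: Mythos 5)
Your proposal is correct and follows the paper's own route: the paper derives Corollary~\ref{spc} exactly by combining the Proposition (after noting that equations~\eqref{eq1} and~\eqref{eq:3} are the same identity under $x\mapsto x-1$), Lemma~\ref{l1}, and the bijection $T\mapsto\mathfrak{b}(T)$ between $\mathcal{T}(n,d,k)$ and $\mathcal{B}(n,d,k)$. Your write-up merely makes explicit the "putting this all together" step that the paper leaves to the reader, including the key surjectivity point supplied by the cardinality count $\binom{n-k-1}{d-k-1}$.
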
\qed

\section{Algebraic Tools}\label{s3}

Let $\Gamma$ be a simplicial complex on the vertex set $[n]$. Let $\k$ be a field of characteristic zero and $R=\k[x_1,\ldots,x_n]$ the polynomial ring on $n$ variables. The \emph{Stanley-Reisner ideal} $I_\Gamma$ of $\Gamma$ is the ideal in $R$ generated by all monomials $x_{i_1}\cdots x_{i_l}$, where $\{i_1,\ldots,i_l\}$ is not a face of $\Gamma$. The \emph{face ring} $\k[\Gamma]$ of $\Gamma$ is the quotient ring $R/I_\Gamma$.  

Let $G$ be a graph on the vertex set $[n]$. The collection $\Delta(G)$ of the cliques in $G$ forms a simplicial complex, known as the \emph{clique complex} of $G$. Clique complexes are \emph{flag}, that is, all minimal non-faces have the same cardinality two. Moreover, every flag complex is the clique complex of its underlying graph ($1$-skeleton).

In this section we study the connectivity number of a chordal graph via a homological invariant, namely the bigraded Betti numbers (see e.g.~\cite[Appendix A]{HH}) of the face ring of its clique complex. We start with a general result.

\begin{thm}\label{betti}
A graph $G$ is $k$-connected if and only if $\beta_{i,i+1}(\k[\Delta(G)])=0$ for all $i\geq n-k$. In particular, $\kappa(G)=\max\{ k\mid \beta_{i,i+1}(\k[\Delta(G)])=0 \text{ for all } i\geq n-k \}$.
\end{thm}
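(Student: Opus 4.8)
The plan is to compute the linear-strand Betti numbers $\beta_{i,i+1}(\k[\Delta(G)])$ by Hochster's formula and then read connectivity directly off the homology of induced subcomplexes. Since $\Delta(G)$ is flag, its Stanley--Reisner ideal $I_{\Delta(G)}$ is generated by the quadratic monomials $x_ux_v$ corresponding to the non-edges $\{u,v\}$ of $G$; in particular there are no minimal generators below degree $2$, and the numbers $\beta_{i,i+1}$ are exactly the ranks in the linear strand of the resolution. Hochster's formula (see \cite[Appendix A]{HH}) then gives
\[
\beta_{i,i+1}(\k[\Delta(G)]) = \sum_{\substack{W\subseteq[n]\\ |W|=i+1}} \dim_\k \tilde{H}_0(\Delta(G)_W;\k),
\]
where $\Delta(G)_W$ denotes the subcomplex induced on $W$.

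The key combinatorial observation is that this induced subcomplex is again a clique complex: $\Delta(G)_W = \Delta(G[W])$, where $G[W]$ is the induced subgraph on $W$. Because the $1$-skeleton of a clique complex is the graph itself, the connected components of $\Delta(G[W])$ coincide with those of $G[W]$, so that
\[
\dim_\k \tilde{H}_0(\Delta(G)_W;\k) = c(G[W]) - 1,
\]
where $c(G[W])$ is the number of connected components of $G[W]$. Every summand is therefore non-negative, and I conclude that $\beta_{i,i+1}(\k[\Delta(G)]) = 0$ if and only if $G[W]$ is connected for every $W$ with $|W| = i+1$.

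The theorem then reduces to reindexing. Writing $W = [n]\setminus U$, so that $|U| = n-1-i$, the hypothesis ``$\beta_{i,i+1} = 0$ for all $i \ge n-k$'' translates into the statement that $G - U = G[[n]\setminus U]$ is connected for every vertex set $U$ with $|U| \le k-1$, which is precisely the definition of $k$-connectedness. I would establish both implications within this single equivalence, noting that the top index $i = n-1$ (that is, $U = \emptyset$ and $W = [n]$) recovers connectedness of $G$ itself, while $i = n-k$ accounts for the largest admissible deletions. The ``In particular'' clause is then immediate by taking the maximal $k$ for which the vanishing holds.

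The proof is soft once Hochster's formula is in place, so the main point needing care is the boundary behaviour rather than any hard estimate. I would check the trivial vanishing $\beta_{0,1} = 0$ (each induced subcomplex on a single vertex is a point) to confirm consistency at the bottom of the strand, and I would restrict attention to the range $0 \le k \le n$ so that the cardinality clause in the definition of $k$-connectedness (that $G$ has at least $k$ vertices) is respected: for $k$ exceeding $n$ the homological condition becomes vacuous whereas $k$-connectedness fails for counting reasons, and this degenerate case should be handled separately.
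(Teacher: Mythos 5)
Your proposal is correct and follows essentially the same route as the paper's proof: Hochster's formula for the linear strand, the observation that $\Delta(G)_W$ is the clique complex of the induced subgraph $G[W]$, and the reindexing $W = [n]\setminus U$ to translate vanishing into $k$-connectedness. Your extra attention to the boundary cases ($\beta_{0,1}=0$ and the degenerate range $k>n$, where the homological condition is vacuous but $k$-connectedness fails for cardinality reasons) is a sound refinement that the paper leaves implicit.
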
 
\begin{proof}
By Hochster's formula~\cite[Theorem 8.1.1]{HH}
\begin{equation*}
\beta_{i,i+1}(\k[\Delta(G)])=\sum_{\mid W\mid=i+1}\dim_\k \widetilde{H}_0\left(\Delta(G)_W\right).
\end{equation*}
On the other hand, the induced subcomplex $\Delta(G)_W$ is the clique complex of the induced graph $G_W$. So, $\beta_{i,i+1}(\k[\Delta(G)])=0$ if and only if $G_W$ is connected for all $W$ of cardinality $i+1$. Now, since the induced subgraph on a set $W$ is the same as the graph obtained by removing the complement $\overline{W}$ of $W$ from $G$, it follows that $\beta_{i,i+1}(\k[\Delta(G)])=0$ if and only if removing any set of $n-i-1$ vertices results in a connected graph. Therefore $\beta_{i,i+1}(\k[\Delta(G)])=0$ for all $i\geq n-k$ if and only if removing any set of at most $k-1$ vertices leaves a connected graph, as desired.

\end{proof}
The theorem above gives a general lower bound for the connectivity number of the graph.
\begin{cor}\label{dps}
If $G$ is a graph, then $\dps (\k[\Delta(G)])\leq \kappa(G)+1$.
\end{cor}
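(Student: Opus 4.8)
The plan is to deduce the bound from the Auslander--Buchsbaum formula together with Theorem~\ref{betti}. Write $n$ for the number of vertices of $G$ and $\kappa=\kappa(G)$. Since $R=\k[x_1,\dots,x_n]$ is a polynomial ring, $\dps(R)=n$, and Auslander--Buchsbaum gives $\dps(\k[\Delta(G)])=n-\operatorname{pd}(\k[\Delta(G)])$, where $\operatorname{pd}$ denotes projective dimension. Hence the asserted inequality $\dps(\k[\Delta(G)])\le\kappa+1$ is equivalent to the lower bound $\operatorname{pd}(\k[\Delta(G)])\ge n-\kappa-1$, and this is what I would aim to prove.

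To produce such a lower bound I would exhibit a nonvanishing Betti number in homological degree at least $n-\kappa-1$, recalling that $\operatorname{pd}(\k[\Delta(G)])=\max\{i:\beta_{i,j}\neq 0 \text{ for some } j\}$. By maximality of $\kappa$ in Theorem~\ref{betti}, the graph $G$ fails to be $(\kappa+1)$-connected, so it is not true that $\beta_{i,i+1}(\k[\Delta(G)])=0$ for all $i\ge n-(\kappa+1)$. Consequently there is some index $i\ge n-\kappa-1$ with $\beta_{i,i+1}(\k[\Delta(G)])\neq 0$, which forces $\operatorname{pd}(\k[\Delta(G)])\ge n-\kappa-1$, and the corollary follows.

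The only point needing care is the highly connected boundary case: if $\kappa\ge n-1$ (for instance when $G$ is complete), then $n-\kappa-1\le 0$ and the desired bound $\operatorname{pd}(\k[\Delta(G)])\ge n-\kappa-1$ holds trivially, since projective dimension is nonnegative; here Theorem~\ref{betti} need not supply a witnessing Betti number. Thus I would split into the trivial case $\kappa\ge n-1$ and the main case $\kappa\le n-2$, in which $\kappa+1\le n-1$ guarantees that ``$G$ is not $(\kappa+1)$-connected'' genuinely reflects the disconnection of some induced subgraph, so that Theorem~\ref{betti} applies. I expect the interplay between the graph-theoretic cardinality requirement in the definition of $k$-connectivity and the homological characterization to be the subtle step, but it only affects this degenerate boundary and not the substance of the argument.
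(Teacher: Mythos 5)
Your proof is correct and takes essentially the same route as the paper: both arguments combine Theorem~\ref{betti} with the Auslander--Buchsbaum formula, the only difference being that you run the key implication in contrapositive form (failure of $(\kappa+1)$-connectivity forces some nonvanishing $\beta_{i,i+1}$ with $i\geq n-\kappa-1$, hence $\operatorname{pd}(\k[\Delta(G)])\geq n-\kappa-1$), whereas the paper argues directly that the vanishing of $\beta_{i,i+1}$ for $i\geq \operatorname{pd}+1$ yields $\kappa(G)\geq n-\operatorname{pd}-1$. Your separate treatment of the boundary case $\kappa\geq n-1$ is sound and slightly more careful than the paper, which absorbs such degeneracies into the convention that every graph is $0$-connected.
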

\begin{proof}
If the projective dimension of $\k[\Delta(G)]$ is $p$, then $\beta_{i,i+1}(\k[\Delta(G)])=0$ for all $i\geq p+1$. Thus, Theorem~\ref{betti} gives the lower bound of $n-p-1$ for $\kappa(G)$. And therefore the result follows from Auslander--Buchsbaum formula~\cite[Corollary A.4.3]{HH}. 
\end{proof}
In the rest of this section, we show that the bound obtained in Corollary~\ref{dps} is sharp as it is realized for the chordal graphs. The following fundamental result of Ralf Fr\"{o}berg plays an essential role in the rest of this paper.
\begin{thm}[Fr\"{o}berg~\cite{Ralf}]\label{Ralf} Let $\Gamma$ be a simplicial complex. Then $\Gamma$ is the clique complex of a chordal graph if and only if $\k[\Gamma]$ has a $2$-linear resolution, i.e. $\beta_{i,j}(\k[\Gamma])=0$, whenever $(i,j)\neq (0,0)$ and $j-i\neq 1$.

\end{thm}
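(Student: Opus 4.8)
The plan is to deduce both implications from Hochster's formula \cite[Theorem 8.1.1]{HH}, exactly the tool already used in the proof of Theorem~\ref{betti}, but now in full generality:
\[
\beta_{i,j}(\k[\Gamma]) = \sum_{|W| = j} \dim_\k \widetilde{H}_{j-i-1}(\Gamma_W).
\]
From this form one reads off that a $2$-linear resolution—the vanishing of $\beta_{i,j}$ for $j > i+1$—is equivalent to the vanishing of $\widetilde{H}_\ell(\Gamma_W)$ for every vertex set $W$ and every $\ell \geq 1$. Since $\Gamma = \Delta(G)$ is a clique complex and $\Gamma_W = \Delta(G_W)$, the theorem reduces to the purely topological statement that $G$ is chordal if and only if the clique complex of every induced subgraph of $G$ has trivial reduced homology in all positive degrees. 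Because each summand above is a dimension, no cancellation occurs, so it suffices to exhibit, or to rule out, a single offending induced subcomplex.

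For the ``only if'' direction I would argue by contraposition. If $G$ is not chordal it contains an induced cycle $C_m$ with $m \geq 4$; having no triangles, its clique complex is the cycle itself, which is homotopy equivalent to $S^1$, so $\widetilde{H}_1(\Delta(C_m)) \neq 0$. Taking $W$ to be the $m$ vertices of this cycle, Hochster's formula forces $\beta_{m-2,\,m}(\k[\Gamma]) \neq 0$, and since $m - (m-2) = 2 \neq 1$ the resolution is not $2$-linear.

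The substantive direction is the converse, and I expect the acyclicity argument to be the main obstacle. The key structural input is Dirac's theorem: every chordal graph has a simplicial vertex $v$, one whose neighborhood $N(v)$ is a clique. I would then induct on the number of vertices. Writing $\Delta(G) = \Delta(G - v) \cup \overline{\operatorname{star}}(v)$, the closed star $\overline{\operatorname{star}}(v) = \Delta(N[v])$ is a full simplex, and its intersection with $\Delta(G-v)$ is the full simplex $\Delta(N(v))$; both are contractible. A Mayer--Vietoris comparison then yields $\widetilde{H}_\ell(\Delta(G)) \cong \widetilde{H}_\ell(\Delta(G - v))$ for every $\ell$, and since chordality is inherited by the induced subgraph $G - v$, the induction hypothesis gives vanishing of the positive reduced homology of $\Delta(G)$. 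Applying this to every induced subgraph—each again chordal—provides the required vanishing for all $\Gamma_W$ and completes the equivalence. The delicate points to check are that $N[v]$ is genuinely a clique (so that the closed star is a simplex) and that both the gluing piece and the intersection are contractible, which is precisely what makes the Mayer--Vietoris sequence collapse.
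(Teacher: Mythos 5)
First, a point of comparison that is really a non-comparison: the paper does not prove this statement at all. It is Fr\"{o}berg's theorem, imported by citation to~\cite{Ralf} and used as a black box (in Corollary~\ref{chdps} and Lemma~\ref{lm}), so your argument can only be judged on its own merits. On those merits it is essentially the standard proof of the homological form of Fr\"{o}berg's theorem, and its skeleton is sound: since every summand in Hochster's formula $\beta_{i,j}(\k[\Gamma])=\sum_{|W|=j}\dim_\k \widetilde{H}_{j-i-1}(\Gamma_W)$ is a nonnegative dimension, $2$-linearity is indeed equivalent to $\widetilde{H}_\ell(\Gamma_W)=0$ for all $W\subseteq[n]$ and all $\ell\geq 1$; an induced $m$-cycle with $m\geq 4$ has clique complex a simplicial circle, so it forces $\beta_{m-2,m}\neq 0$; and the Dirac simplicial-vertex induction, with Mayer--Vietoris collapsing because $\Delta(N[v])$ and $\Delta(N(v))$ are simplices, correctly yields the vanishing of positive reduced homology for clique complexes of chordal graphs, hence for all induced subcomplexes.

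Two gaps need repair before this is a complete proof. (i) The theorem quantifies over arbitrary simplicial complexes $\Gamma$, so in the direction ``$2$-linear $\Rightarrow$ clique complex of a chordal graph'' you may not begin with ``Since $\Gamma=\Delta(G)$ is a clique complex'': flagness is part of what must be proved. It follows in one line --- $\beta_{1,j}(\k[\Gamma])$ counts degree-$j$ minimal generators of $I_\Gamma$, and $2$-linearity kills these for $j\neq 2$; equivalently, a minimal non-face $F$ with $|F|=m\geq 3$ has $\Gamma_F$ equal to the boundary of a simplex, so $\widetilde{H}_{m-2}(\Gamma_F)\neq 0$ --- but without this observation your reduction to the ``purely topological statement'' does not cover the full claim. (ii) The Mayer--Vietoris step silently assumes $N(v)\neq\emptyset$. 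If the simplicial vertex $v$ is isolated (chordal graphs need not be connected), the intersection $\Delta(G-v)\cap\Delta(N[v])$ is the empty complex, which is not contractible (it has nontrivial $\widetilde{H}_{-1}$), and the sequence does not collapse as claimed; this case must be handled separately, e.g.\ by noting that adding an isolated vertex leaves $\widetilde{H}_\ell$ unchanged for all $\ell\geq 1$. With these two additions your argument is correct and complete.
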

\begin{cor}\label{chdps}
If $G$ is a chordal graph, then $\dps (\k[\Delta(G)])= \kappa(G)+1$.
\end{cor}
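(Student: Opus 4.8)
The plan is to upgrade the inequality of Corollary~\ref{dps} to an equality by exploiting the special shape of the minimal free resolution guaranteed by Fr\"oberg's theorem. Since Corollary~\ref{dps} already supplies $\dps(\k[\Delta(G)]) \leq \kappa(G) + 1$, it suffices to establish the reverse inequality, and in fact I would pin down the projective dimension exactly and read off both bounds at once.

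First I would invoke Theorem~\ref{Ralf}: because $G$ is chordal, $\k[\Delta(G)]$ has a $2$-linear resolution, so the only Betti numbers that can be nonzero are $\beta_{0,0}$ and the $\beta_{i,i+1}$. Writing $p$ for the projective dimension, this linearity forces the top of the resolution to lie in the linear strand, so that $\beta_{p,p+1}\neq 0$ while $\beta_{i,i+1}=0$ for every $i>p$. (The degenerate case $p=0$ corresponds to $G$ complete, where $\k[\Delta(G)]=R$ is free and one checks $\dps=n=\kappa(G)+1$ directly.)

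Next I would feed this into Theorem~\ref{betti}, whose characterization reads $\kappa(G)=\max\{k\mid \beta_{i,i+1}=0 \text{ for all } i\geq n-k\}$. Given that $p$ is the largest index with $\beta_{i,i+1}\neq 0$, the condition $\beta_{i,i+1}=0$ for all $i\geq n-k$ holds precisely when $n-k\geq p+1$, i.e.\ $k\leq n-p-1$. Hence $\kappa(G)=n-p-1$, equivalently $p=n-\kappa(G)-1$. The Auslander--Buchsbaum formula then converts this into the desired depth statement: $\dps(\k[\Delta(G)])=n-p=\kappa(G)+1$.

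The crux of the argument is the observation in the second step, namely that $2$-linearity of the resolution makes the projective dimension coincide with the top of the linear strand of $\beta_{i,i+1}$'s. This is exactly what can fail for a general (non-chordal) graph, where a nonlinear Betti number $\beta_{i,j}$ with $j>i+1$ may push the projective dimension higher than the top of the linear strand, thereby making the inequality of Corollary~\ref{dps} strict. So the whole content of the corollary is that chordality removes this slack, and Fr\"oberg's theorem is precisely the tool that certifies its removal.
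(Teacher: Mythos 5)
Your proof is correct and follows essentially the same route as the paper: Fr\"oberg's Theorem~\ref{Ralf} forces all nonzero Betti numbers (besides $\beta_{0,0}$) into the linear strand, so the projective dimension $p$ is the top of that strand, Theorem~\ref{betti} then gives $p+1=n-\kappa(G)$, and Auslander--Buchsbaum converts this to $\dps(\k[\Delta(G)])=\kappa(G)+1$. Your explicit treatment of the degenerate case $p=0$ (the complete graph) is a small refinement the paper glosses over, but it does not change the substance of the argument.
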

\begin{proof}
If $G$ is a chordal graph, then by Fr\"{o}berg's Theorem~\ref{Ralf}, we have $\beta_{i,j}(\k[\Delta(G)])=0$, whenever $j-i\neq 1$. So, the projective dimension is equal to the maximum $p$ such that $\beta_{p,p+1}(\k[\Delta(G)])\neq 0$. It now follows from Theorem~\ref{betti} that $p+1=n-\kappa(G)$. 
\end{proof}

\section{Main Result}\label{s4}
In this section, we prove our main result by using techniques from shifting theory.

A simplicial complex $\Gamma$ on the vertex set $[n]$ is \emph{shifted} if, for $F\in\Gamma$, $i\in F$, $j\notin F$ and $j<i$ the set $(F\setminus\{i\})\cup\{j\}$ is a face of $\Gamma$. A shifted complex is flag if and only if it is clique complex of a threshold graph~\cite[Theorem 2]{K}. \emph{Exterior algebraic shifting} is an operation $\Gamma\rightarrow\Gamma^e$, associating to a simplicial complex $\Gamma$ a shifted simplicial complex $\Gamma^e$, while preserving many interesting algebraic, combinatorial and topological invariants and properties. We refer the reader to the book by Herzog and Hibi~\cite{HH} for the precise definition and more information. Here we mention some of the properties that will be used later.

\begin{lem}\label{shift}
Let $\Gamma$ be a simplicial complex. Then the following hold.
\begin{compactenum}[\rm (i)]
\item\label{part1} Exterior shifting preserves the $f$-vector; $f(\Gamma)=f(\Gamma^e)$.
\item Alexander duality and exterior shifting commute; $(\Gamma^{*})^e=(\Gamma^e)^*$.
\item Exterior shifting preserves the depth; $\dps(\k[\Gamma])=\dps(\k[\Gamma^e])$. 
\end{compactenum}

\end{lem}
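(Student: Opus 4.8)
The plan is to work throughout with the generic-initial-ideal description of exterior shifting, and to prove the three parts in the order (i), (ii), (iii), deducing the last from the first two. Fix the exterior algebra $E=\bigwedge\langle e_1,\ldots,e_n\rangle$ over $\k$, and for a complex $\Gamma$ on $[n]$ let $J_\Gamma=(e_F : F\notin\Gamma)$ be its exterior face ideal, where $e_F=e_{i_1}\wedge\cdots\wedge e_{i_k}$ for $F=\{i_1<\cdots<i_k\}$. Recall that $\Gamma^e$ is the complex whose exterior face ideal is the generic initial ideal $\gin(J_\Gamma)=\In(\phi(J_\Gamma))$, for a generic $\phi\in\mathrm{GL}_n(\k)$ and a fixed term order, and that this ideal is squarefree strongly stable, so that $\Gamma^e$ is shifted.

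For (i), the point is that passing to the initial ideal under a fixed term order preserves the $\k$-dimension in each degree, and a generic linear change of coordinates is an isomorphism; hence $\dim_\k(E/J_\Gamma)_j=\dim_\k(E/\gin(J_\Gamma))_j$ for every $j$. Since $f_{j-1}(\Gamma)=\dim_\k(E/J_\Gamma)_j$ counts the monomials $e_F$ with $F\in\Gamma$ and $\mid F\mid=j$, the equality of Hilbert functions is precisely the equality $f(\Gamma)=f(\Gamma^e)$.

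For (ii) --- which I expect to be the main obstacle --- I would realize Alexander duality algebraically through the perfect pairing $E_j\times E_{n-j}\to E_n\cong\k$, $(u,v)\mapsto u\wedge v$. In the monomial basis this pairing is diagonal, since $e_F\wedge e_G=\pm e_{[n]}$ exactly when $G=[n]\setminus F$, and complementation $F\mapsto[n]\setminus F$ turns the non-faces of $\Gamma^*$ into the faces of $\Gamma$; consequently the pairing identifies $J_{\Gamma^*}$ with the image of $J_\Gamma$ under the $E$-module duality afforded by the Gorenstein property of $E$. The key technical step is that this duality is $\mathrm{GL}_n$-equivariant up to the determinant twist on $E_n$, and that complementation of monomials reverses the chosen term order in a controlled way, so it intertwines a generic $\phi$ with its contragredient and commutes with the formation of initial ideals. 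Tracking these compatibilities identifies $\gin(J_{\Gamma^*})$ with the exterior face ideal of $(\Gamma^e)^*$, i.e.\ yields $(\Gamma^*)^e=(\Gamma^e)^*$; this is Kalai's commutation theorem, and the hard part is exactly the verification that $\gin$ respects the pairing, rather than merely respecting $\phi$ and $\In$ separately.

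Finally, for (iii) I would avoid homology and instead reduce depth to a regularity on the dual side. By the Auslander--Buchsbaum formula $\dps\k[\Gamma]=n-\mathrm{pd}\,\k[\Gamma]$, and by Terai's formula the projective dimension of $\k[\Gamma]$ equals the Castelnuovo--Mumford regularity of the Stanley--Reisner ideal $I_{\Gamma^*}$ of the Alexander dual. Since regularity is preserved by algebraic shifting and, by part (ii), $(\Gamma^e)^*=(\Gamma^*)^e$, we obtain
\[
\mathrm{pd}\,\k[\Gamma]=\mathrm{reg}\,I_{\Gamma^*}=\mathrm{reg}\,I_{(\Gamma^*)^e}=\mathrm{reg}\,I_{(\Gamma^e)^*}=\mathrm{pd}\,\k[\Gamma^e],
\]
and since part (i) guarantees that $\Gamma$ and $\Gamma^e$ have the same vertex number $n$, a second application of Auslander--Buchsbaum gives $\dps\k[\Gamma]=\dps\k[\Gamma^e]$. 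The only inputs beyond (i) and (ii) are two standard facts, Terai's duality and the shift-invariance of regularity (equivalently, the preservation of the extremal Betti numbers), so once the pairing argument of (ii) is in place the remaining deduction is essentially formal.
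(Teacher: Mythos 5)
You should know at the outset that the paper never proves this lemma: it is stated as a list of known properties of exterior shifting, closed immediately with a QED symbol, with Herzog--Hibi's book as the blanket reference. So the comparison here is not with an internal argument but with the literature your sketch is reconstructing --- and on that score your outline is correct and is the standard one. Part (i) is the usual Hilbert-function argument: a generic $\varphi\in\mathrm{GL}_n(\k)$ is a graded automorphism of $E$, passing to the initial ideal preserves Hilbert functions also over the exterior algebra, and the Hilbert function of $E/J_\Gamma$ is exactly the $f$-vector. Part (ii) is precisely Kalai's commutation theorem, and the mechanism you name (the perfect pairing $E_j\times E_{n-j}\to E_n$, under which $J_{\Gamma^*}$ is the annihilator $(0:_E J_\Gamma)$, together with $\mathrm{GL}_n$-equivariance up to the determinant twist) is the mechanism of its proof. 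Part (iii), assembled from Auslander--Buchsbaum, Terai's formula $\mathrm{pd}\,\k[\Gamma]=\mathrm{reg}\,I_{\Gamma^*}$, the shift-invariance of regularity, and part (ii), is a valid deduction.

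Two caveats. First, your part (ii) is a plan rather than a proof: the sentence claiming that complementation ``commutes with the formation of initial ideals'' is the entire content of the theorem. To close it one needs (a) $\varphi(u)\wedge\varphi(v)=\det(\varphi)\,(u\wedge v)$, whence $(0:_E\varphi(J))=\varphi\bigl((0:_E J)\bigr)$, so genericity transfers across the duality; and (b) the identity $\mathrm{in}_{\tau'}\bigl((0:_E I)\bigr)=\bigl(0:_E \mathrm{in}_{\tau}(I)\bigr)$, where $\tau'$ is the order induced from $\tau$ by complementation of monomials; (b) is proved by one inclusion plus a dimension count coming from the perfectness of the pairing. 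You correctly flag this as the hard part, but as written it is asserted, not verified. Second, your route to (iii) is more circuitous than necessary: the preservation of extremal Betti numbers under exterior shifting (Aramova--Herzog) already yields $\mathrm{pd}\,\k[\Gamma]=\mathrm{pd}\,\k[\Gamma^e]$ directly, and Auslander--Buchsbaum finishes, making (iii) independent of (ii); since the shift-invariance of regularity you invoke is itself a corollary of that same extremal-Betti-number theorem, your detour through the Alexander dual adds a dependency without saving any input. Both of the ``standard facts'' you cite are substantial theorems in their own right, which is presumably why the paper chose to cite this lemma rather than prove it.
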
\qed

The following result is known and has been used in the literature, see e.g. \cite[Theorem 3.1]{GY}. However, for the convenience of the reader, we supply a proof.
\begin{lem}\label{lm}
Let $\Gamma$ be a flag complex. Then $\Gamma$ is the clique complex of a chordal graph if and only if its exterior shifting $\Gamma^e$ is the clique complex of a threshold graph.
\end{lem}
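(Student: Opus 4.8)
The plan is to characterize the clique complex of a chordal graph via the $2$-linear resolution property of Fröberg's Theorem~\ref{Ralf}, and then transport this property through exterior shifting using Lemma~\ref{shift}. The key observation is that both ``being the clique complex of a chordal graph'' (for $\Gamma$) and ``being the clique complex of a threshold graph'' (for $\Gamma^e$) admit characterizations in terms of the vanishing of graded Betti numbers, and that exterior shifting, while it need not preserve individual Betti numbers, does interact well with the linearity of the resolution.

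First I would record the algebraic characterization of each side. By Fröberg's Theorem~\ref{Ralf}, $\Gamma$ is the clique complex of a chordal graph if and only if $\k[\Gamma]$ has a $2$-linear resolution, that is $\beta_{i,j}(\k[\Gamma])=0$ whenever $(i,j)\neq(0,0)$ and $j-i\neq 1$. On the shifted side, since $\Gamma^e$ is always shifted, it is flag (hence the clique complex of a threshold graph) precisely when it is flag, and by the cited result~\cite[Theorem 2]{K} a shifted flag complex is exactly the clique complex of a threshold graph. So the content of the lemma reduces to: $\Gamma$ is chordal-clique (equivalently, $\k[\Gamma]$ has a $2$-linear resolution) if and only if $\Gamma^e$ is flag.

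The forward direction is the cleaner one. Assuming $\Gamma$ is the clique complex of a chordal graph, $\k[\Gamma]$ has a $2$-linear resolution. The crucial fact I would invoke is that having a $2$-linear resolution is equivalent, via the Eagon--Reiner theorem and Alexander duality, to a purity/componentwise-linearity condition on the Alexander dual, and that this condition is preserved under exterior shifting. Concretely, since $\Gamma$ is flag, its minimal nonfaces all have cardinality $2$; I would argue that the $2$-linearity of the resolution forces the generators of $I_{\Gamma^e}$ to remain in degree $2$, so that $\Gamma^e$ is again flag, hence the clique complex of a threshold graph. Here Lemma~\ref{shift}\eqref{part1} (preservation of the $f$-vector) and the commutation of shifting with Alexander duality are the levers that let me pass the degree information from $\Gamma$ to $\Gamma^e$.

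The hard part, and the main obstacle, is the converse: from $\Gamma^e$ flag (a threshold clique complex) to $\Gamma$ chordal. The subtlety is that exterior shifting does not in general reflect structural properties — a non-flag complex could a priori shift to a flag one — so I cannot simply run the forward argument backwards. The essential input I would rely on is that exterior shifting can only \emph{increase} the degrees appearing in the minimal free resolution in a controlled way: the shifted complex $\Gamma^e$ has the ``most spread out'' resolution, so if $\Gamma^e$ already has all its Stanley--Reisner generators in degree $2$ (equivalently a $2$-linear resolution, because a threshold clique complex is chordal and Fröberg applies to it), then the extremal/rigidity behavior of shifting forces $\k[\Gamma]$ to have a $2$-linear resolution as well. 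The technical heart is therefore the inequality comparing the Betti numbers (or the highest degree of a minimal generator) of $\k[\Gamma]$ and $\k[\Gamma^e]$ in the right direction, together with the fact that $\Gamma$ is assumed flag so that any failure of $2$-linearity would have to appear already in degree $\geq 3$ and survive shifting. Once the $2$-linear resolution of $\k[\Gamma]$ is established, Fröberg's Theorem~\ref{Ralf} delivers that $\Gamma$ is the clique complex of a chordal graph, completing the equivalence.
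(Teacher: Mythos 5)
Your overall skeleton---Fr\"{o}berg's Theorem~\ref{Ralf} on one side, Klivans' identification of shifted flag complexes with threshold clique complexes~\cite[Theorem 2]{K} on the other, and a transport of $2$-linearity across shifting---is the same as the paper's, but both directions have gaps as written. In the forward direction you name the wrong levers. The condition that the Eagon--Reiner theorem~\cite[Theorem 8.19]{HH} attaches to the Alexander dual is not ``purity/componentwise linearity''; it is \emph{Cohen--Macaulayness}: $\k[\Gamma]$ has a $2$-linear resolution if and only if $\Gamma^{*}$ is Cohen--Macaulay of dimension $n-3$. To move this across shifting, i.e.\ from $\Gamma^{*}$ to $(\Gamma^{*})^e=(\Gamma^e)^{*}$, you need to know that exterior shifting preserves Cohen--Macaulayness, and that is exactly what part (iii) of Lemma~\ref{shift} (preservation of depth, combined with preservation of dimension via the $f$-vector) supplies---the one property you never invoke. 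The two levers you do cite, Lemma~\ref{shift}\eqref{part1} and commutation with Alexander duality, cannot by themselves control the degrees of the generators of $I_{\Gamma^e}$: Cohen--Macaulayness is not an $f$-vector condition. Your ``concrete'' step---that $2$-linearity of $\k[\Gamma]$ ``forces the generators of $I_{\Gamma^e}$ to remain in degree $2$''---is a restatement of the conclusion, not an argument. The paper's actual chain is: $G$ chordal $\Rightarrow$ $\k[\Gamma]$ has a $2$-linear resolution (Fr\"{o}berg) $\Rightarrow$ $\Gamma^{*}$ is CM of dimension $n-3$ (Eagon--Reiner) $\Rightarrow$ $(\Gamma^e)^{*}=(\Gamma^{*})^e$ is CM of the same dimension (duality commutation plus depth preservation) $\Rightarrow$ $\k[\Gamma^e]$ has a $2$-linear resolution (Eagon--Reiner again) $\Rightarrow$ $\Gamma^e$ is the clique complex of a chordal graph (Fr\"{o}berg again), which is threshold because $\Gamma^e$ is shifted and flag.

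For the converse, your guiding premise---that one ``cannot simply run the forward argument backwards''---is mistaken once the forward argument is set up as above: every link in that chain is an equivalence (Fr\"{o}berg and Eagon--Reiner are if-and-only-if statements, and Cohen--Macaulayness transfers in \emph{both} directions because depth and dimension are preserved in both directions), so the converse is literally the same chain read right to left; this is all the paper means by ``reversing the proof sequence.'' The alternative route you propose instead rests on the monotonicity of graded Betti numbers under exterior shifting, $\beta_{i,j}(\k[\Gamma])\leq\beta_{i,j}(\k[\Gamma^e])$. That inequality is in fact a true but substantial theorem (due to Aramova and Herzog, in characteristic zero), and granting it your converse does go through---indeed it would make the converse the easy direction and render the flagness hypothesis on $\Gamma$ superfluous. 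But you neither prove it nor cite it, and it lies well outside the three shifting properties recorded in Lemma~\ref{shift}; as written, this is a hole where the technical heart of your argument should be, whereas the paper needs nothing beyond the tools it has already stated.
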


\begin{proof} We show the ``only if'' direction. The other direction follows by reversing the proof sequence. Suppose that $\Gamma=\Delta(G)$ for some chordal graph $G$. Next, Fr\"{o}berg's Theorem~\ref{Ralf} implies that $\k[\Gamma]$ has a $2$-linear resolution. Thus, it follows from Eagon--Reiner Theorem~\cite[Theorem 8.19]{HH}, that the Alexander dual $\Gamma^{*}$ of $\Gamma$ is Cohen--Macaulay of dimension $n-3$. So, $(\Gamma^e)^{*}$ is Cohen--Macaulay of the same dimension, since exterior algebraic shifting commutes with Alexander duality and preserves Cohan--Macaulayness. Hence, the theorems of Eagon--Reiner and Fr\"{o}berg imply that $\Gamma^e$ is the clique complex of a chordal graph $T$. Now, since $\Gamma^e$ is flag and shifted,  $T$ is a threshold graph. 
\end{proof}
Now we are in the position to prove our main result.
\begin{proof}[Proof of Theorem~\ref{main}]
Let $G$ be a $k$-connected chordal graph. Let us denote by $G^e$ the threshold graph such that $\Delta(G^e)=\Delta(G)^e$. It follows from part~\eqref{part1} of Lemma~\ref{shift} that $\mathfrak{c}(G)=\mathfrak{c}(G^e)$. On the other hand, since $\dps(\k[\Delta(G)])=\dps(\k[\Delta(G^e)])$, Corollary~\ref{chdps} implies that $G^e$ is $k$-connected. Therefore the result follows from Corollary~\ref{spc}.
\end{proof}
\paragraph*{Acknowledgments.} I am grateful to Anders Bj\"{o}rner, Anton Dochtermann and Siamak Yassemi for valuable comments. I would also like to thank the anonymous referees whose
comments led to an improvement of the paper.

{\small
\def\cprime{$'$}
\providecommand{\bysame}{\leavevmode\hbox to3em{\hrulefill}\thinspace}
\providecommand{\MR}{\relax\ifhmode\unskip\space\fi MR }
\providecommand{\MRhref}[2]{%
  \href{http://www.ams.org/mathscinet-getitem?mr=#1}{#2}
}
\providecommand{\href}[2]{#2}

}
\end{document}